\documentclass[a4paper, 12pt]{article}
\usepackage[margin=25mm]{geometry}

\usepackage{amsmath}
\usepackage{amsfonts} 
\usepackage{amssymb}
\usepackage{amsthm} 
 
\usepackage{graphicx}     

\usepackage{subfigure} 


%
%
%
%

\theoremstyle{plain}
\newtheorem{theorem}{Theorem}[section]

\newtheorem{cor}[theorem]{Corollary}

\newtheorem{lem}[theorem]{Lemma}

\newtheorem{prop}[theorem]{Proposition}

\theoremstyle{definition}

\newtheorem{rem}[theorem]{Remark}
 %



\providecommand{\keywords}[1]
{
  \small	
  \textbf{\textit{Keywords.}} #1
}

\providecommand{\msc}[1]
{
  \small	
  \textbf{\textit{2020 Mathematics Subject Classification.}} #1
}



\title{
Boundary layer profiles of positive solutions for logistic equations with sublinear nonlinearity on the boundary
}

\author{Kenichiro Umezu  \\
        \small Department of Mathematics, Ibaraki University, Mito 310--8512, Japan \\
        \small E-mail: \texttt{kenichiro.umezu.math@vc.ibaraki.ac.jp}  \\
}

\numberwithin{equation}{section}
\numberwithin{theorem}{section}

\allowdisplaybreaks[4] 

\parskip=3pt 

\date{} 
\pagestyle{plain} 



\begin{document} 
\maketitle

\begin{abstract}
In this paper, we consider the logistic elliptic equation $-\Delta u = u- u^{p}$ in a smooth bounded domain $\Omega \subset \mathbb{R}^{N}$, $N\geq2$, equipped with the sublinear Neumann boundary condition $\frac{\partial u}{\partial \nu} = \mu u^{q}$ on $\partial \Omega$, where $0<q<1<p$, and $\mu\geq0$ is a parameter. 
With sub-- and supersolutions and a comparison principle for the equation, we analyze the asymptotic profile of a unique positive solution for the equation as $\mu \to \infty$. 
\end{abstract} \hspace{10pt}

\keywords{Semilinear elliptic equation, logistic nonlinearity, sublinear and non--Lipschitz boundary condition, sub-- and supersolutions, comparison principle, 
positive solution curve, boundary layer profile.}

\msc{35B09, 35B30, 35B40, 35B51, 35J25, 35J66.}   



\section{Introduction} 
\label{sec:intro} 

This paper is devoted to the study of the following logistic elliptic equation, equipped with a nonlinear boundary condition with non--Lipschitz nonlinearity at zero and sublinearity at infinity. 
\begin{align} \label{p}
\begin{cases}
-\Delta u = u-u^{p} & \mbox{ in } \Omega,  \\
\frac{\partial u}{\partial \nu}  = \mu u^q & \mbox{ on } \partial\Omega. 
\end{cases} 
\end{align}
Here, $\Omega\subset \mathbb{R}^N$, $N\geq2$, is a bounded domain with a smooth boundary $\partial\Omega$, 
$\Delta = \sum_{i=1}^N \frac{\partial^2}{\partial x_i^2}$ is the usual Laplacian in $\mathbb{R}^N$, $0<q<1<p$, $\mu \geq 0$ is a parameter, and $\nu$ is the unit outer normal to $\partial\Omega$. 

We call a nonnegative function $u\in H^1(\Omega)\cap C(\overline{\Omega})$ a \textit{nonnegative solution} of \eqref{p} if $u$ satisfies 
\begin{align} \label{sol:def}
\int_{\Omega} \biggl( \nabla u \nabla \varphi -u\varphi + u^p \varphi \biggr)dx - \mu \int_{\partial\Omega} u^{q} \varphi \, d\sigma =0, \quad \varphi \in H^1(\Omega). 
\end{align} 
We see that $u=0$ is a nonnegative solution of \eqref{p} for every $\mu\geq 0$, called a {\it trivial solution}. 
We know (\cite[Theorem 2.2]{Ro2005}) that if we additionally assume $p<\frac{N+2}{N-2}$ for $N>2$, then $0\leq u \in H^{1}(\Omega)$ satisfying \eqref{sol:def} belongs to $W^{1,r}(\Omega)$ for $r>N$, and consequently, to $C^{\theta}(\overline{\Omega})$ for $0<\theta<1$. Thus, $u$ is a nonnegative solution of \eqref{p}. 
A nonnegative solution $u$ of \eqref{p} that is nontrivial implies $u\in C^{2+\theta}(\Omega)$ for $0<\theta<1$ by elliptic regularity (\cite{GT83}), and hence, $u>0$ in $\Omega$ by the strong maximum principle, called \textit{a positive solution}. 
In fact, Hopf's boundary point lemma is difficult to apply to the positivity on $\partial\Omega$ for a positive solution of \eqref{p}, because it is unclear from $0<q<1$ if it is $C^1(\overline{\Omega})$. 
If a positive solution $u$ of \eqref{p} is positive on the entirety of $\partial \Omega$, then $u \in C^{2+\theta}(\overline{\Omega})$ by a bootstrap argument based on elliptic regularity, and thus, $u$ satisfies \eqref{p} {\it pointwisely} in $\overline{\Omega}$ (actually, it is verified by Proposition \ref{prop:uniq-spositive}(ii) below that a positive solution of \eqref{p} is indeed positive in $\overline{\Omega}$, and so, it is $C^{2+\theta}(\overline{\Omega})$). 

Problem \eqref{p} with $\mu=0$, i.e., the Neumann problem has the unique positive solution $u=1$. We know that, for $\mu > 0$, problem \eqref{p} has {\it at most} one positive solution that is positive in $\overline{\Omega}$ (\cite[Theorem 6.3, Chapter 4]{Pa92}). The objective of this paper is to evaluate the unique existence of a positive solution for \eqref{p} and its asymptotic profile as  $\mu \to \infty$. Our main result below presents that the unique positive solution converges to a boundary layer solution of the associated boundary blow--up problem. 
%
%
\begin{theorem} \label{mr} 
Problem \eqref{p} has a {\rm unique} positive solution $u_{\mu}$ for each $\mu > 0$, satisfying $u_{0}=1$, and $\mu \mapsto u_{\mu}$ is an increasing $C^{\infty}$--map from $[0, \infty)$ into $C^{2+\theta}(\overline{\Omega})$, $0<\theta<1$. Moreover, the following {\rm three} assertions hold:
\begin{enumerate}
\item There exists $u_{\infty}\in C^{2+\theta}(\Omega)$ such that, for $S\Subset \Omega$, $u_{\mu} \rightarrow u_\infty$ in $C^{2+\theta}(\overline{S})$ as $\mu\to \infty$. Moreover, $u_{\infty}$ satisfies 
\begin{align} \label{limitprob}
\begin{cases}
-\Delta u = u-u^{p} & \mbox{ in } \Omega, \\
u>0 & \mbox{ in } \Omega, \\ 
u = \infty & \mbox{ on } \partial\Omega.  
\end{cases} 
\end{align}
Here, the statement $u=\infty$ on $\partial\Omega$ means that $u(x) \rightarrow \infty$ as $d(x, \partial\Omega) \to 0$. More precisely, there exist $\overline{A}\geq \underline{A}>0$ such that, if $\mu > 0$ is sufficiently large, then we obtain  
\begin{align*}
\underline{A}\, \mu^{\frac{2}{p-2q+1}} \leq u_{\mu} \leq \overline{A}\, \mu^{\frac{2}{p-2q+1}} \quad\mbox{ on } \partial\Omega. 
\end{align*}

\item For $r\in \mathbb{R}$, $\int_{\Omega} u_{\mu}^{r} dx \rightarrow \int_{\Omega} u_{\infty}^{r} dx \in (0, \infty]$ as $\mu \to \infty$. 
If $r<\frac{p-1}{2}$, then $\int_{\Omega} u_{\infty}^{r} dx \in (0, \infty)$, and further, $\int_{\Omega} (u_{\infty}-u_{\mu})^{r}dx \rightarrow 0$ as $\mu \to \infty$ additionally if $r>0$. Meanwhile, if $r\geq \frac{p-1}{2}$, then $\int_{\Omega} u_{\infty}^{r}dx = \infty$. 

\item Let $r\geq1$. Assume additionally that $\Omega$ is convex if $r\geq \frac{p-1}{2}$. Then, $\| \nabla u_{\mu} \|_{L^{r}(\Omega)}\rightarrow \infty$ as $\mu \to \infty$. 

\end{enumerate} 
\end{theorem}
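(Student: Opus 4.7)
The plan is to split the argument into four steps, the core of the work being a precise boundary-layer analysis.

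\textbf{Existence, uniqueness and smoothness of the curve.} The constant $\underline{u}\equiv 1$ is a subsolution for every $\mu\geq 0$, since $-\Delta \underline{u}=0=\underline{u}-\underline{u}^{p}$ and $\partial_{\nu}\underline{u}=0\leq \mu \underline{u}^{q}$. A supersolution can be obtained by taking the classical large (boundary blow-up) solution $U$ of $-\Delta v=v-v^{p}$ in $\Omega$, $v=\infty$ on $\partial\Omega$, which is automatically a supersolution up to the boundary. Uniqueness is the comparison result already cited, and the strict monotonicity $\mu_{1}<\mu_{2}\Rightarrow u_{\mu_{1}}<u_{\mu_{2}}$ is another comparison argument, since $u_{\mu_{1}}$ is a strict subsolution at level $\mu_{2}$. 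Smoothness of $\mu\mapsto u_{\mu}$ then follows from the implicit function theorem in $C^{2+\theta}(\overline{\Omega})$, provided the linearization $-\Delta-1+p u_{\mu}^{p-1}$ with Robin boundary datum involving $\mu q u_{\mu}^{q-1}$ is invertible; this holds because $u_{\mu}$ itself is a strict supersolution of its own linearization, by virtue of $p>1>q$.

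\textbf{Boundary scaling (the main obstacle).} The exponent $2/(p-2q+1)$ is pinned down by a self-similar ansatz $u_{\mu}(x)\approx \mu^{\alpha}V(\mu^{\beta}d(x))$ in a collar of $\partial\Omega$, where $d(x)=\mathrm{dist}(x,\partial\Omega)$: the PDE balances $-V''\sim -V^{p}$ and gives $2\beta=\alpha(p-1)$, while the boundary condition balances $-V'(0)\sim \mu V(0)^{q}$ and gives $\alpha+\beta=1+\alpha q$. Solving simultaneously yields $\alpha=2/(p-2q+1)$ and $\beta=(p-1)/(p-2q+1)$. To turn this heuristic into rigorous two-sided bounds I would construct explicit sub- and supersolutions of the form $A_{\pm}\mu^{\alpha}V_{\pm}(\mu^{\beta}d(x))$ in Fermi coordinates along $\partial\Omega$ and patch them to the interior bounds $1\leq u_{\mu}\leq U$. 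The chief difficulty is that the curvature of $\partial\Omega$ and the linear term $u$ in $u-u^{p}$ are both lower order with respect to the dominant balance and have to be absorbed into constants on both sides.

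\textbf{Passage to the limit and integral convergence.} Monotonicity gives a pointwise limit $u_{\infty}\leq U$; interior Schauder estimates upgrade this to $C^{2+\theta}(\overline{S})$ convergence for every $S\Subset \Omega$, so $u_{\infty}$ satisfies $-\Delta u=u-u^{p}$ in $\Omega$, and the lower boundary bound from Step~2 forces $u_{\infty}(x)\to \infty$ as $d(x,\partial\Omega)\to 0$. Uniqueness of the large solution of \eqref{limitprob} then identifies $u_{\infty}$. Assertion (ii) rests on the Keller--Osserman rate $u_{\infty}(x)\sim c_{0}d(x,\partial\Omega)^{-2/(p-1)}$, which gives $\int_{\Omega}u_{\infty}^{r}\,dx<\infty$ iff $r<(p-1)/2$; monotone convergence for $r\geq 0$ and dominated convergence with $u_{\mu}\geq 1$ for $r<0$ yield $\int_{\Omega} u_{\mu}^{r}\,dx\to \int_{\Omega} u_{\infty}^{r}\,dx$, and $\int_{\Omega}(u_{\infty}-u_{\mu})^{r}\,dx\to 0$ for $0<r<(p-1)/2$ follows from dominated convergence using $u_{\infty}$ as dominator.

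\textbf{Gradient blow-up.} The direct boundary-layer calculation gives $|\nabla u_{\mu}|\gtrsim \mu^{\alpha+\beta}$ inside the collar of width $\mu^{-\beta}$, hence
\[
\int_{\Omega}|\nabla u_{\mu}|^{r}\,dx\;\gtrsim\; \mu^{r(\alpha+\beta)-\beta}\;=\;\mu^{(2r+(r-1)(p-1))/(p-2q+1)},
\]
whose exponent is strictly positive for $r\geq 1$. Alternatively, multiplying the PDE by $u_{\mu}^{r-1}$ and integrating by parts yields
\[
(r-1)\int_{\Omega}|\nabla u_{\mu}|^{2}u_{\mu}^{r-2}\,dx \;=\; \mu\int_{\partial\Omega}u_{\mu}^{q+r-1}\,d\sigma+\int_{\Omega}\bigl(u_{\mu}^{r}-u_{\mu}^{p+r-1}\bigr)\,dx,
\]
and for $r<(p-1)/2$ the bulk integrand is controlled while the boundary integral diverges by Step~2. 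In the remaining regime $r\geq (p-1)/2$ the convexity hypothesis on $\Omega$ permits a Pohozaev-type identity (using $(x-x_{0})\cdot\nu\geq 0$) which supplies a matching upper bound on the bulk contribution, leaving $\|\nabla u_{\mu}\|_{L^{r}}$ as the dominant diverging term.
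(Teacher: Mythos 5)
Your overall strategy (explicit $\mu$-dependent barriers with the scaling $\mu^{2/(p-2q+1)}$ near $\partial\Omega$, interior elliptic estimates, monotone passage to the limit) is the same as the paper's, and your exponent balancing is exactly right. But the core step is not actually carried out, and one auxiliary claim would fail. First, the boundary blow-up solution $U$ of \eqref{limitprob} is not an admissible supersolution of \eqref{p}: it is neither in $C(\overline{\Omega})$ nor in $H^{1}(\Omega)$ (it behaves like $d(x,\partial\Omega)^{-2/(p-1)}$), so neither Amann's sub/supersolution theorem nor the weak comparison principle (Lemma \ref{prop:cp}) applies to it, and in any case no fixed $\mu$-independent finite function can serve, since the boundary inequality $\frac{\partial \overline{u}}{\partial\nu}\geq\mu\overline{u}^{q}$ fails once $\mu$ is large. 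The upper barrier must itself grow like $\mu^{2/(p-2q+1)}$ on $\partial\Omega$ --- which is precisely what your Step 2 is supposed to deliver, but there you only state that you ``would construct'' sub/supersolutions $A_{\pm}\mu^{\alpha}V_{\pm}(\mu^{\beta}d)$ in Fermi coordinates and patch them, explicitly deferring the absorption of curvature and lower-order terms. That construction is the heart of the theorem; the paper does it with the single global barrier $\psi_{A}=A(\phi_{\Omega}+\mu^{-\tau})^{-2/(p-1)}$, $\tau=\frac{p-1}{p-2q+1}$ (Lemma \ref{lem:subsuper}), which avoids coordinates and patching altogether and yields both constants $\underline{A},\overline{A}$ uniformly in $\mu$. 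Relatedly, existence of the whole curve needs more than the ordered pair: the paper combines positivity of the principal eigenvalue of the linearization \eqref{gamm1posi} (your ``supersolution of its own linearization'' remark is the right idea but is not a proof) with the a priori bounds \eqref{Aline} to rule out blow-up of the branch at finite $\mu$.

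Second, in Step 4 the pointwise bound $|\nabla u_{\mu}|\gtrsim\mu^{\alpha+\beta}$ in the collar does not follow from two-sided bounds on $u_{\mu}$ (the gradient may be small at interior points of the layer); the correct version is an oscillation-plus-H\"older argument along normal segments of length $\sim\mu^{-\beta}$, using $u_{\mu}\geq\underline{A}\mu^{\alpha}$ on $\partial\Omega$ and the upper barrier at depth $K\mu^{-\beta}$ --- that argument does give your integral estimate and would even dispense with convexity. Your fallback identity, however, is flawed as stated: with the established boundary and interior bounds, $\mu\int_{\partial\Omega}u_{\mu}^{q+r-1}d\sigma$ and $\int_{\Omega}u_{\mu}^{p+r-1}dx$ are both of order $\mu^{\alpha r+\beta}$, and at $r=1$ they cancel exactly, so the assertion that ``the bulk integrand is controlled while the boundary integral diverges'' cannot be used to force gradient blow-up; the Pohozaev step for $r\geq\frac{p-1}{2}$ is likewise only named, not performed. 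The paper's route for (iii) is more economical: the trace theorem plus \eqref{boundonbdry} give $\|u_{\mu}\|_{W^{1,r}(\Omega)}\to\infty$, and one removes the $L^{r}$ part either by assertion (ii) (for $r<\frac{p-1}{2}$) or by the convex-domain Poincar\'e inequality \eqref{Pineq} (for $r\geq\frac{p-1}{2}$).
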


\begin{rem} 
\begin{enumerate}
    \item We know that problem \eqref{limitprob} has at most one positive solution (see for instance \cite[Theorem 2.2]{MV97}).  
    \item When $q=1$, Garc\'ia-Meli\'an, Sabina de Lis, and Rossi \cite[Theorem 4]{GM07} have obtained a similar result to assertion (i). 
\end{enumerate}
\end{rem}

Few studies have explored the existence of a boundary layer profile for the case of nonlinear boundary conditions with parameters. This study is inspired by Garc\'ia-Meli\'an, Rossi, and Sabina de Lis \cite{GM10}, who studied layer profiles as $\lambda \to \infty$ of the unique positive solution for the equation $\Delta u = u^{p}$ in $\Omega$, $p>1$, equipped with the nonlinear boundary condition $\frac{\partial u}{\partial \nu}=\lambda u - u^q$ on $\partial \Omega$, $q>1$ (see \cite[Theorem 1.4]{GM10}). 
For the existence, uniqueness, and multiplicity of positive solutions for logistic elliptic equations with the nonlinear boundary condition $\frac{\partial u}{\partial \nu} = u^q$ on $\partial\Omega$, $q>0$, we refer to \cite{MR05, GM08}.

It should be emphasized that the sublinear boundary condition appearing in \eqref{p} models coastal fishery harvesting when $\mu=-\lambda<0$ (see \cite[Subsection 2.1]{GUU19}). In fact, when $p=2$, the unknown function $u>0$ in $\Omega$ for \eqref{p} ecologically represents the biomass of fish inhabiting, for instance, a \textit{lake} $\Omega$, which obeys the logistic law. Then, the sublinear boundary condition represents fishery harvesting with the harvesting effort $\lambda$ on the \textit{lake coast} $\partial\Omega$. 
In \cite{Um2023, Um2024, Um2024b}, the author studied the existence, uniqueness, and multiplicity of positive solutions for \eqref{p} with $\mu<0$ under the additional assumption $p<\frac{N+2}{N-2}$ if $N>2$ and also studied their asymptotic profiles as $\mu \to 0^{-}$ and $\mu \to -\infty$. Indeed, Theorem \ref{mr} compliments previous results obtained for $\mu<0$ (see Figures \ref{fig_betaless1}--\ref{fig_betagtr1}), where $\beta_{\Omega}$ is the positive value defined as in \eqref{Deprob}, and $u_{\mathcal{D}}\in C^{2+\theta}(\overline{\Omega})$ is the unique positive solution of the Dirichlet problem for the case of $\beta_{\Omega}<1$ (\cite{BO86}) 
\begin{align*}
    \begin{cases}
        -\Delta u = u-u^{p} & \mbox{ in } \Omega, \\
        u = 0 & \mbox{ on } \partial\Omega.  
    \end{cases}
\end{align*}
It is understood that $u_{\mathcal{D}}=0$ when $\beta_{\Omega}=1$. 
%
    \begin{figure}[!htb]
    \centering 
    \includegraphics[scale=0.18]{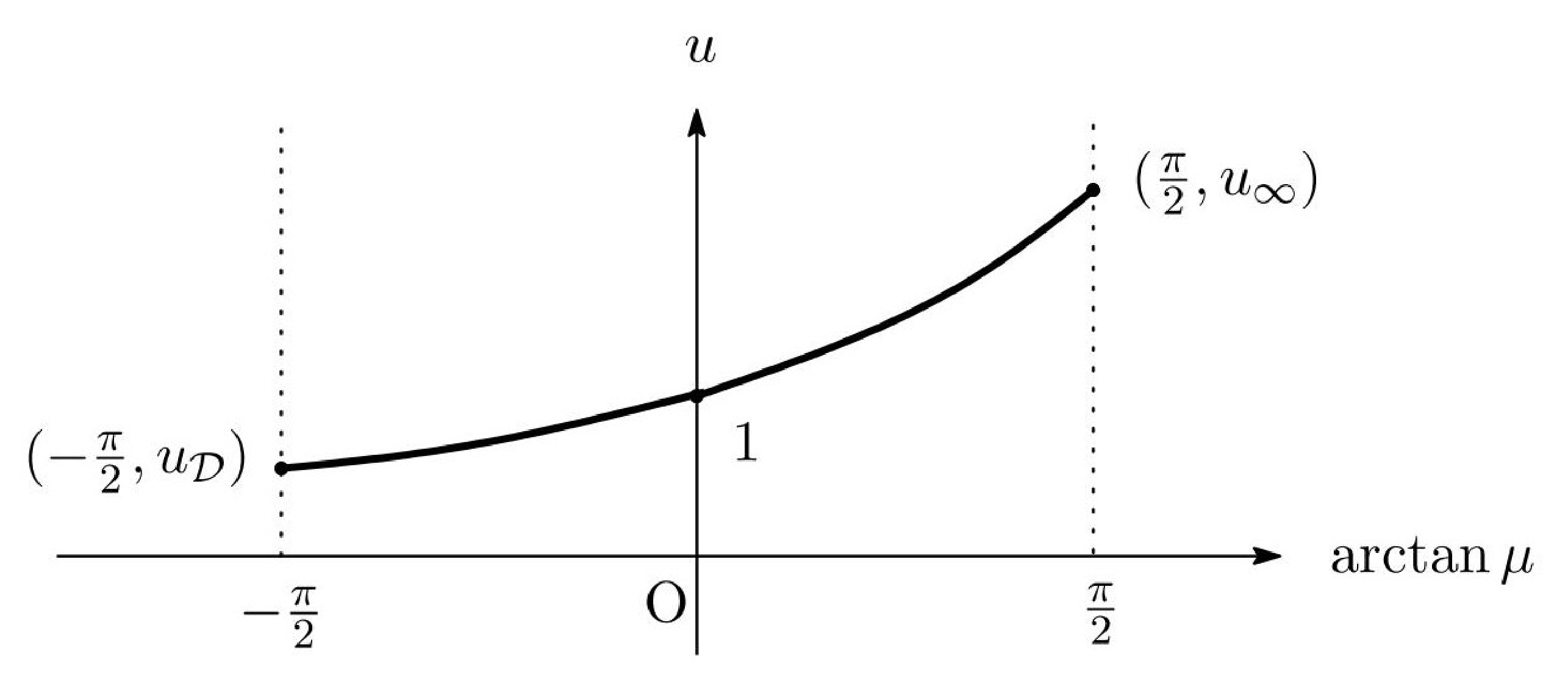} 
		  \caption{Possible positive solution set in the case when $\beta_{\Omega}<1$, or $\beta_{\Omega}=1$ and $pq>1$: no bifurcation points of positive solutions for \eqref{p} on $\{(\mu, 0) : \mu \in \mathbb{R} \}$.}
		\label{fig_betaless1} 
    \end{figure} 
%
    \begin{figure}[!htb]
    \centering 
    \includegraphics[scale=0.18]{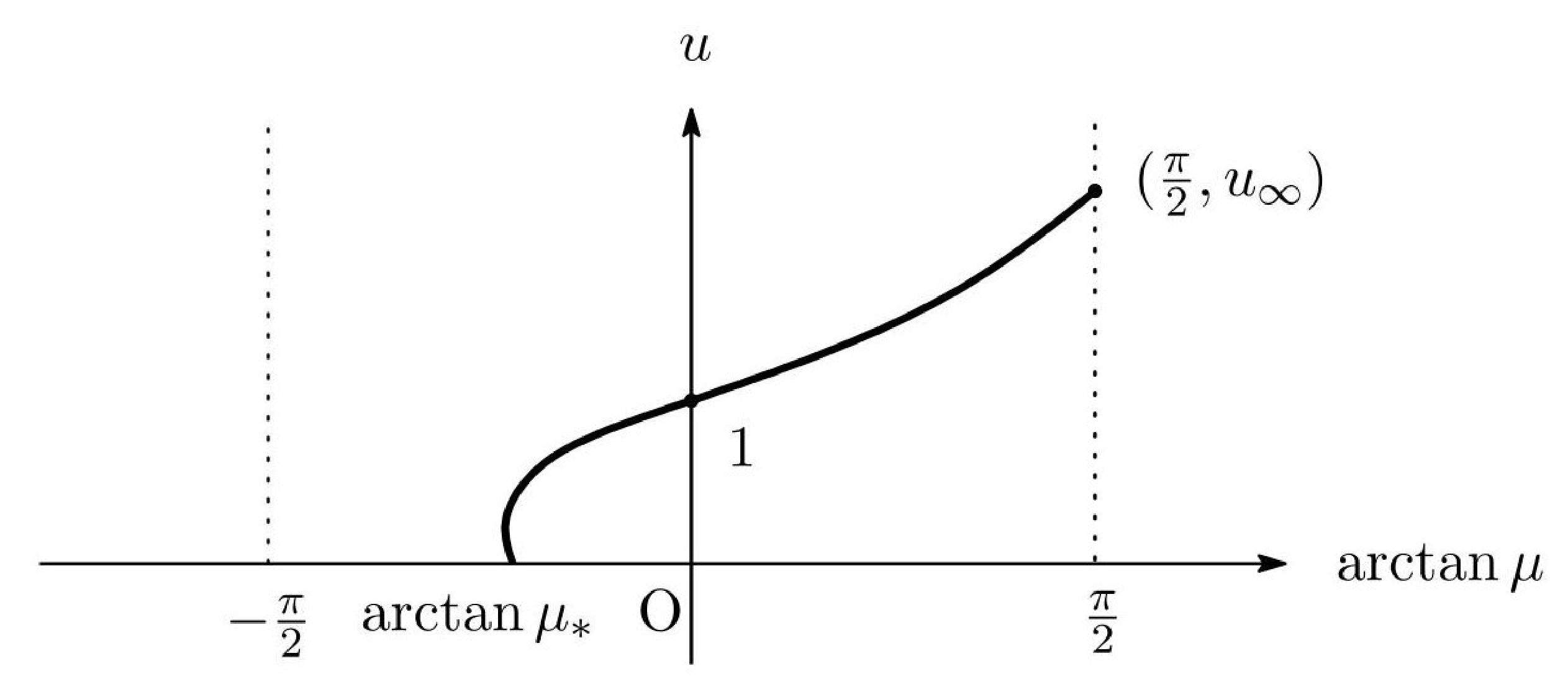} 
		  \caption{Possible positive solution set in the case when $\beta_{\Omega}=1$ and $pq=1$: a bifurcation point of positive solutions for \eqref{p} on and only on $\{ (\mu, 0) : \mu < 0 \}$.}  
		\label{fig_pq1} 
    \end{figure} 
%
    \begin{figure}[!htb]
    \centering 
    \includegraphics[scale=0.18]{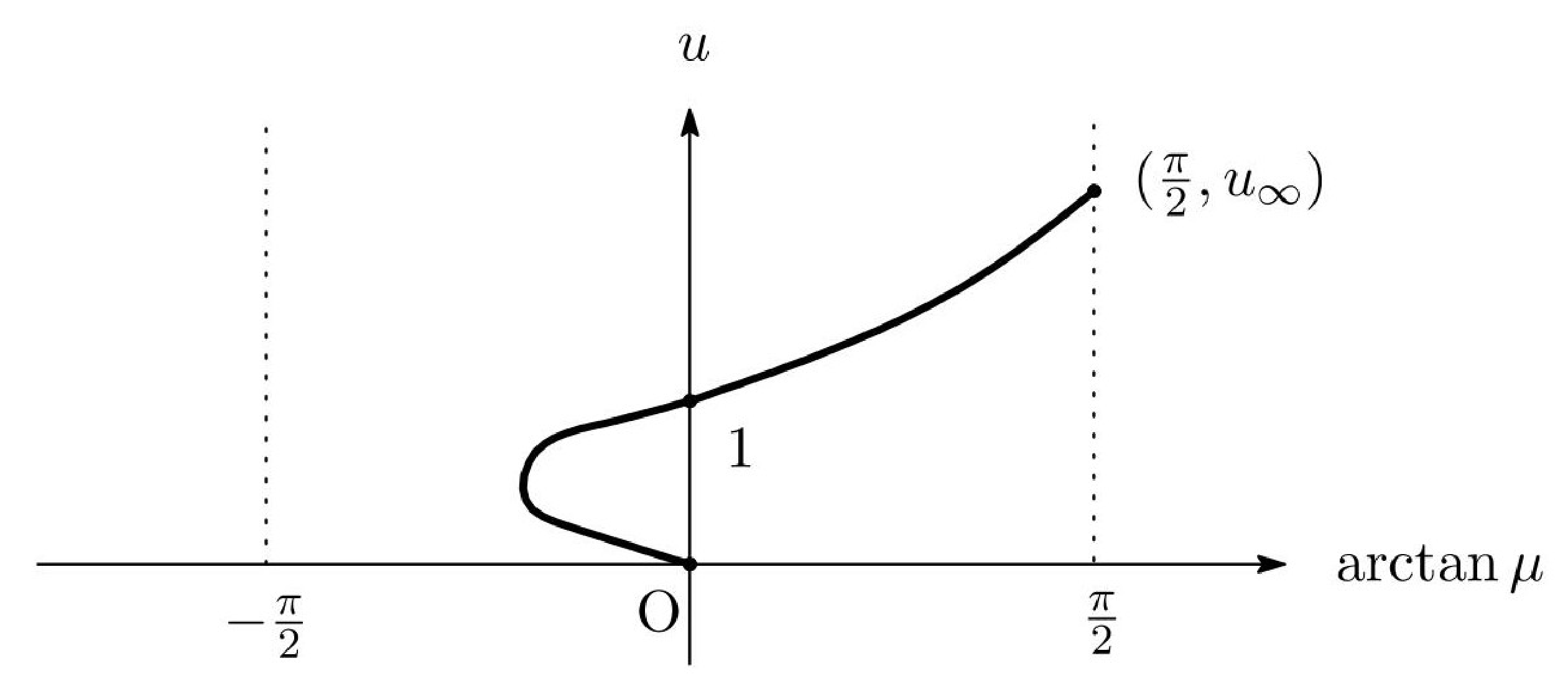} 
		  \caption{Possible positive solution set in the case when $\beta_{\Omega}=1$ and $pq<1$, or $\beta_{\Omega}>1$: a bifurcation point of positive solutions for \eqref{p} at and only at $(\mu, u)=(0,0)$.}
		\label{fig_betagtr1} 
    \end{figure}

The remainder of this paper is organized as follows. Section \ref{sec:bounds} is devoted to the construction of upper and lower bounds of a positive solution for \eqref{p}, which is a crucial step for our aim. Section \ref{sec:curve} proves the existence of a positive solution curve for \eqref{p} emanating from $(\mu,u) = (0,1)$. In Section \ref{sec:profile}, we prove Theorem \ref{mr}, where the asymptotic behavior of the positive solution curve is examined as $\mu \to \infty$, and then, a boundary layer solution of \eqref{limitprob} is established for the limiting profile.

\section{Upper and lower bounds of positive solutions} 
\label{sec:bounds}

First, we discuss the uniqueness of a positive solution for \eqref{p} and its positivity on $\partial\Omega$. To this end, we use a special case of the comparison principle \cite[Proposition A.1]{RQU2017b}, presented in Lemma \ref{prop:cp} below, which is an extension of \cite[Lemma 3.3]{ABC94} to the case of concave nonlinear boundary conditions. 
\begin{lem} \label{prop:cp}
Let $f, g: [0,\infty) \rightarrow \mathbb{R}$ be continuous. Assume that $t\mapsto \frac{f(t)}{t}$ is decreasing in $(0, \infty)$ and 
$t\mapsto \frac{g(t)}{t}$ is nonincreasing in $(0,\infty)$. 
Let $u, v \in H^{1}(\Omega) \cap C(\overline{\Omega})$ be nonnegative functions satisfying 
\begin{align}
&\int_{\Omega} \nabla u \nabla \varphi \, dx - \int_{\Omega} f(u) \varphi \, dx - \int_{\partial\Omega} g(u)\varphi \, d\sigma \leq 0, \quad \forall \varphi \in H^{1}(\Omega) \ \mbox{ and } \ \varphi \geq 0 \ \mbox{ a.e. in } \ \Omega, \label{w:sub}\\
& \int_{\Omega} \nabla v \nabla \varphi \, dx - \int_{\Omega} f(v) \varphi \, dx - \int_{\partial\Omega} g(v)\varphi \, d\sigma \geq 0, \quad \forall \varphi \in H^{1}(\Omega) \ \mbox{ and } \ \varphi \geq 0 \ \mbox{ a.e. in } \ \Omega. \label{w:super}
\end{align}
If $v>0$ in $\Omega$, then $u\leq v$ in $\overline{\Omega}$. 
\end{lem}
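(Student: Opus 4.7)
The plan is to argue by contradiction via a Brezis--Oswald/Picone-type test-function identity. Suppose the open set $\Omega_+:=\{x\in\Omega:u(x)>v(x)\}$ is nonempty. Since $v>0$ in $\Omega$, on $\Omega_+$ we have $u>v>0$, so the nonnegative functions
\[
\varphi_u := \frac{(u^2-v^2)^+}{u}, \qquad \varphi_v := \frac{(u^2-v^2)^+}{v}
\]
(extended by zero on $\overline{\Omega}\setminus\Omega_+$) are natural candidates as test functions in \eqref{w:sub} and \eqref{w:super}, to be made rigorous through a standard $\epsilon$-regularization of the denominators.

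The heart of the argument is a Picone-type identity on $\Omega_+$: writing $\varphi_u = u - v^2/u$ and $\varphi_v = u^2/v - v$, a direct computation gives
\[
\nabla u\cdot\nabla\varphi_u - \nabla v\cdot\nabla\varphi_v = \left|\nabla u - \tfrac{u}{v}\nabla v\right|^2 + \left|\tfrac{v}{u}\nabla u - \nabla v\right|^2 \geq 0.
\]
Testing \eqref{w:sub} with $\varphi_u$ and \eqref{w:super} with $\varphi_v$ and subtracting, the integrated left-hand side is the integral over $\Omega_+$ of this nonnegative expression, while the right-hand side reduces to
\[
\int_{\Omega_+}(u^2-v^2)\!\left(\tfrac{f(u)}{u}-\tfrac{f(v)}{v}\right)dx + \int_{\partial\Omega}(u^2-v^2)^+\!\left(\tfrac{g(u)}{u}-\tfrac{g(v)}{v}\right)d\sigma.
\]
On $\Omega_+$ the strict decrease of $f(t)/t$ combined with $u>v>0$ makes the first integrand pointwise strictly negative; since $\Omega_+$ is open and nonempty, the volume integral is strictly negative. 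The nonincreasing property of $g(t)/t$ makes the boundary integrand nonpositive. Hence $0\leq(\text{strictly negative})$, a contradiction, so $\Omega_+=\emptyset$; continuity of $u,v$ upgrades $u\leq v$ from $\Omega$ to $\overline{\Omega}$.

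The main obstacle is the admissibility of $\varphi_u$ and $\varphi_v$ as $H^1(\Omega)$ test functions when $u$ or $v$ may vanish, in particular on $\partial\Omega$ (where $v>0$ is \emph{not} assumed, and where $g(v)/v$ with $g$ non-Lipschitz at $0$ is delicate). The cure is the $\epsilon$-regularization: replace the denominators by $u+\epsilon$ and $v+\epsilon$, verify that the regularized test functions lie in $H^1(\Omega)\cap L^\infty(\Omega)$, repeat the identity with them, and pass $\epsilon\to 0^+$ using Fatou's lemma on the (nonnegative) gradient side and dominated convergence on the $f$- and $g$-sides, exploiting that $(u^2-v^2)^+\equiv 0$ wherever $u\leq v$ so the problematic ratios are controlled where they actually appear. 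This bookkeeping is the only technical burden; the sign analysis itself is clean.
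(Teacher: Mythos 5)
Your overall strategy (Picone-type identity with test functions $(u^2-v^2)^+/u$ and $(u^2-v^2)^+/v$, plus regularization) is the classical Brezis--Oswald/Ambrosetti--Brezis--Cerami route, and it is surely the family of argument behind the result; note, however, that the paper itself does not prove Lemma \ref{prop:cp} but quotes it from [RQU2017b, Proposition A.1], so your attempt has to stand on its own. It does not, because of the boundary set where $v$ vanishes. The hypothesis is only $v>0$ \emph{in} $\Omega$, so the set $\Gamma_0=\{x\in\partial\Omega: v(x)=0<u(x)\}$ is not excluded, and on $\Gamma_0$ you have $(u^2-v^2)^+=u^2>0$ while the ratio $g(v)/v$ is undefined and the monotonicity of $g(t)/t$ gives no sign information. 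Your cure (``$(u^2-v^2)^+\equiv0$ wherever $u\le v$, so the problematic ratios are controlled where they actually appear'') misses exactly this set: after regularization the corresponding boundary contribution is $\bigl(\tfrac{g(u)}{u+\epsilon}-\tfrac{g(0)}{\epsilon}\bigr)(u^2-v^2)^+$, and in the case relevant to this paper ($g(t)=\mu t^q$, so $g(0)=0$ and $g\ge0$) its limit on $\Gamma_0$ is $g(u)\,u\ge 0$, possibly strictly positive. Hence the assertion ``the nonincreasing property of $g(t)/t$ makes the boundary integrand nonpositive'' is justified only where $u>v>0$ on $\partial\Omega$, and the claimed contradiction $0\le(\text{strictly negative})$ does not follow unless one first shows $\sigma(\Gamma_0)=0$ (equivalently $u\le v$ wherever $v=0$ on $\partial\Omega$), which is part of what the lemma asserts. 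This is the genuine crux of the statement, not bookkeeping, and it is precisely why the concave boundary term requires the more careful argument of the cited reference.

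Two further points, fixable but currently incorrect as written. First, if you regularize only the denominators, i.e.\ use $(u^2-v^2)^+/(u+\epsilon)$ and $(u^2-v^2)^+/(v+\epsilon)$, the pointwise identity $\nabla u\cdot\nabla\varphi_u-\nabla v\cdot\nabla\varphi_v=|\nabla u-\tfrac{u}{v}\nabla v|^2+|\tfrac{v}{u}\nabla u-\nabla v|^2$ no longer holds; you should regularize consistently, e.g.\ test with $\bigl((u+\epsilon)^2-(v+\epsilon)^2\bigr)^+/(u+\epsilon)$ and $\bigl((u+\epsilon)^2-(v+\epsilon)^2\bigr)^+/(v+\epsilon)$, for which the Picone computation applied to $u+\epsilon$, $v+\epsilon$ does give pointwise nonnegativity of the gradient difference (since $\nabla(u+\epsilon)=\nabla u$). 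Second, ``dominated convergence on the $f$- and $g$-sides'' is not literally available: the shifted ratios $f(u)/(u+\epsilon)$, $f(v)/(v+\epsilon)$ need not inherit the monotonicity at fixed $\epsilon$, and a two-sided integrable dominating function can fail when $f(t)/t\to+\infty$ as $t\to0^+$ (e.g.\ $f(t)=\sqrt{t}$). What works is a one-sided bound: from the decrease of $f(t)/t$ one gets $f(v)\ge \tfrac{f(M)}{M}\,v$ with $M=\max_{\overline\Omega}\max(u,v)$, hence an integrable \emph{upper} bound uniform in $\epsilon$, and then a reverse Fatou ($\limsup$) argument in place of dominated convergence; the same device is needed for the $g$-term in the interior part of the boundary estimate. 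Even with these repairs, the argument still stalls at the set $\Gamma_0$ described above, so the proposal as it stands has a genuine gap.
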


The next proposition is a direct consequence of Lemma \ref{prop:cp}. 
\begin{prop} \label{prop:uniq-spositive} 
The following two assertions are valid: 
\begin{enumerate}
    \item A positive solution of \eqref{p} is unique. 
    \item A positive solution $u$ of \eqref{p} is such that $u\geq 1$ in $\overline{\Omega}$. 
\end{enumerate}
\end{prop}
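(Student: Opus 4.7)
The plan is to apply Lemma \ref{prop:cp} with $f(t) = t - t^{p}$ and $g(t) = \mu t^{q}$. First I would verify these satisfy the structural hypotheses of that lemma: since $p>1$, the ratio $f(t)/t = 1 - t^{p-1}$ is strictly decreasing on $(0,\infty)$; since $0 < q < 1$ and $\mu \geq 0$, the ratio $g(t)/t = \mu t^{q-1}$ is nonincreasing on $(0,\infty)$ (in fact decreasing when $\mu > 0$, constant $\equiv 0$ when $\mu = 0$). Both functions are clearly continuous on $[0,\infty)$.

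For assertion (i), let $u_1$ and $u_2$ be two positive solutions of \eqref{p}. As remarked in the introduction, any positive solution is strictly positive in $\Omega$ by the strong maximum principle. Both $u_1$ and $u_2$ are, in particular, weak solutions of \eqref{sol:def}, so they simultaneously satisfy \eqref{w:sub} and \eqref{w:super}. Applying Lemma \ref{prop:cp} with the roles (subsolution, supersolution) taken as $(u_1, u_2)$ yields $u_1 \leq u_2$ in $\overline{\Omega}$, and swapping roles yields the reverse inequality; hence $u_1 \equiv u_2$.

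For assertion (ii), I would use the constant function $1$ as the subsolution in Lemma \ref{prop:cp}. Testing against any nonnegative $\varphi \in H^{1}(\Omega)$ gives
\begin{align*}
\int_{\Omega} \nabla 1 \cdot \nabla \varphi \, dx - \int_{\Omega} f(1)\varphi \, dx - \int_{\partial\Omega} g(1)\varphi \, d\sigma = -\mu \int_{\partial\Omega} \varphi \, d\sigma \leq 0,
\end{align*}
since $f(1) = 0$ and $g(1) = \mu \geq 0$, so \eqref{w:sub} holds. Taking $v$ to be the given positive solution $u$ (which, being positive in $\Omega$, qualifies for the lemma's hypothesis $v > 0$ in $\Omega$), Lemma \ref{prop:cp} yields $1 \leq u$ in $\overline{\Omega}$.

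There is no real obstacle, as Lemma \ref{prop:cp} was precisely designed to accommodate concave boundary nonlinearities of this kind. The only point worth being careful about is that we need the comparison function $v$ to be strictly positive in $\Omega$ in order to invoke the lemma for (i); this is guaranteed by the strong maximum principle applied to the nontrivial nonnegative solution, as already noted in the introduction. A pleasant byproduct of (ii) is that $u \geq 1$ on $\partial\Omega$, which confirms that every positive solution is in fact positive on $\overline{\Omega}$ and therefore belongs to $C^{2+\theta}(\overline{\Omega})$, as anticipated in the introductory discussion.
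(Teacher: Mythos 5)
Your proof is correct and follows essentially the same route as the paper: both assertions are obtained from the comparison principle of Lemma \ref{prop:cp} with $f(t)=t-t^{p}$, $g(t)=\mu t^{q}$, using each positive solution as both sub- and supersolution for uniqueness, and the constant $1$ as a subsolution compared against the positive solution for the lower bound. Your explicit verification of the monotonicity hypotheses on $f(t)/t$ and $g(t)/t$ is a harmless addition that the paper leaves implicit.
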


\begin{proof}
(i) Let $u, v$ be positive solutions of \eqref{p}. Note that $u, v > 0$ in $\Omega$. Then, because $u, v$ both satisfy \eqref{sol:def}, applying Lemma \ref{prop:cp} with $f(u)=u-u^{p}$ and $g(u)=\mu u^{q}$ provides $u\leq v$ and $v\leq u$, as desired. 

(ii) Observe that $\underline{u}=1$ verifies  
\begin{align*} 
\int_{\Omega} \biggl( \nabla \underline{u} \nabla \varphi - \underline{u}\varphi + \underline{u}^{p} \varphi \biggr)dx - \mu \int_{\partial\Omega} \underline{u}^{q} \varphi \,  d\sigma \leq 0, \quad \forall \varphi \in H^1(\Omega) \ \mbox{ and } \ \varphi\geq 0 \ \mbox{ a.e. in } \ \Omega. 
\end{align*}
Similarly, we deduce that $u\geq 1$ in $\overline{\Omega}$ for a positive solution $u$ of \eqref{p}. 
\end{proof}

Second, in terms of Lemma \ref{prop:cp}, we construct sub-- and supersolutions of \eqref{p} to establish {\it a priori} upper and lower bounds of a positive solution for \eqref{p}. A function $\underline{u}\in C^{2+\theta}(\overline{\Omega})$ with the condition that $\underline{u}>0$ in $\overline{\Omega}$ is said to be a {\it subsolution} of \eqref{p} if $\underline{u}$ satisfies   
\begin{align*}
\begin{cases}
-\Delta u \leq u-u^{p} & \mbox{ in } \Omega,  \\
\frac{\partial u}{\partial \nu}  \leq \mu u^q & \mbox{ on } \partial\Omega. 
\end{cases} 
\end{align*}
A supersolution of \eqref{p} is defined by reversing the inequalities (\cite[Theorem 2.1]{Am76N}). 
The next result is clear from Lemma \ref{prop:cp}.  
\begin{lem} \label{lem:compa2}
If $\underline{u}$ and $\overline{u}$ are sub- and supersolutions of \eqref{p}, respectively, then $\underline{u}\leq \overline{u}$ in $\overline{\Omega}$. 
\end{lem}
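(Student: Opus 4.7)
The plan is to reduce the classical sub-- and supersolution inequalities to the weak formulations \eqref{w:sub} and \eqref{w:super} appearing in Lemma \ref{prop:cp}, so that the conclusion follows immediately from that comparison principle with $f(t) = t - t^{p}$ and $g(t) = \mu t^{q}$.

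First, I would fix an arbitrary nonnegative $\varphi \in H^{1}(\Omega)$, multiply the pointwise inequality $-\Delta \underline{u} \leq \underline{u} - \underline{u}^{p}$ by $\varphi$, and integrate by parts using Green's identity on $\underline{u} \in C^{2+\theta}(\overline{\Omega})$; the boundary term $\int_{\partial\Omega} \frac{\partial \underline{u}}{\partial \nu} \varphi \, d\sigma$ is controlled from above by $\mu \int_{\partial\Omega} \underline{u}^{q} \varphi \, d\sigma$ thanks to the boundary inequality of the subsolution definition. This yields
\begin{align*}
\int_{\Omega} \nabla \underline{u} \nabla \varphi \, dx - \int_{\Omega} (\underline{u} - \underline{u}^{p}) \varphi \, dx - \mu \int_{\partial\Omega} \underline{u}^{q} \varphi \, d\sigma \leq 0,
\end{align*}
which is exactly \eqref{w:sub}. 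The mirror-image argument (with the inequalities reversed) gives \eqref{w:super} for $\overline{u}$.

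Next, I would verify the hypotheses on $f$ and $g$. Since $p > 1$, the ratio $f(t)/t = 1 - t^{p-1}$ is strictly decreasing on $(0, \infty)$; since $0 < q < 1$ and $\mu \geq 0$, the ratio $g(t)/t = \mu t^{q-1}$ is nonincreasing on $(0, \infty)$ (strictly decreasing when $\mu > 0$, and identically zero when $\mu = 0$). Both functions are clearly continuous on $[0, \infty)$, so the structural hypotheses of Lemma \ref{prop:cp} are satisfied. Moreover, the definition of a supersolution requires $\overline{u} > 0$ in $\overline{\Omega}$, which certainly implies $\overline{u} > 0$ in $\Omega$, supplying the positivity hypothesis of Lemma \ref{prop:cp}.

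Finally, applying Lemma \ref{prop:cp} to the pair $(\underline{u}, \overline{u})$ yields $\underline{u} \leq \overline{u}$ in $\overline{\Omega}$, as required. I expect no genuine obstacle: the entire content is a bookkeeping exercise to align the classical sub--/supersolution notion with the weak inequalities demanded by Lemma \ref{prop:cp}, the only minor subtlety being the sign convention when integrating by parts against a nonnegative test function and the observation that the $\mu = 0$ case still fits the nonincreasing (rather than strictly decreasing) requirement on $g(t)/t$.
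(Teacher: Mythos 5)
Your proposal is correct and follows exactly the route the paper intends: the paper simply states that the lemma "is clear from Lemma \ref{prop:cp}," and your argument supplies the routine verification (integration by parts to pass from the classical inequalities to \eqref{w:sub}--\eqref{w:super}, the monotonicity of $f(t)/t$ and $g(t)/t$, and the positivity of $\overline{u}$) that the paper leaves implicit.
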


We denote by $\phi_{\Omega} \in C^{2+\theta}(\overline{\Omega})$ the positive eigenfunction, satisfying $\| \phi_{\Omega}\|_{C(\overline{\Omega})}=1$, associated with the smallest eigenvalue $\beta_{\Omega}>0$ of the Dirichlet eigenvalue problem 
\begin{align} \label{Deprob}
    \begin{cases}
        -\Delta \phi = \beta \phi & \mbox{ in } \Omega, \\
        \phi = 0 & \mbox{ on }  \partial \Omega.  
    \end{cases}
\end{align}
Choose $c_{1}, c_{2}, c_{3}>0$ such that 
$c_{1} \leq -\frac{\partial \phi_{\Omega}}{\partial \nu}\leq c_{2}$ on $\partial\Omega$ and $|\nabla \phi_{\Omega} |^2 \leq c_{3}$ in $\Omega$. Let 
\begin{align} \label{Sep}
S_{\varepsilon}=\{ x\in \Omega : d(x,\partial\Omega)>\varepsilon \}, \quad \varepsilon>0. 
\end{align}
In terms of the presence of $c_{1}$, we can choose $b_{1}>0$ and a sufficiently small $\varepsilon_{0}>0$ such that $|\nabla \phi_{\Omega}|^2 \geq b_{1}$ in $\Omega \setminus \overline{S_{\varepsilon_{0}}}$, and then, we say $b_{2} = \inf_{S_{\varepsilon_{0}}} \phi_{\Omega}>0$. We now define 
\begin{align} \label{def:psiA}
\psi = \psi_{A}(x) = A(\phi_{\Omega}(x) + \mu^{-\tau})^{-\rho}, \quad x 
\in \overline{\Omega}, 
\end{align}
where $\tau = \frac{p-1}{p-2q+1}$, $\rho=\frac{2}{p-1}$, and $A>0$ is a constant that will be determined later. Note that $0<q<1<p$ implies $p-2q+1>0$; thus, $\tau, \rho > 0$. For $\underline{\mu}>0$, take $c_{\underline{\mu}}>0$ such that 
$\phi_{\Omega}+\mu^{-\tau}\leq c_{\underline{\mu}}$ in $\Omega$ for $\mu\geq \underline{\mu}$. 

The next lemma is crucial for obtaining upper and lower bounds of a positive solution for \eqref{p}. 
\begin{lem} \label{lem:subsuper}
Let $\underline{\mu}>0$. Then, there exist $\underline{A}, \overline{A}>0$ such that if $\mu\geq \underline{\mu}$, then $\psi_{A}$ is a subsolution of \eqref{p} for $A\leq \underline{A}$, while it is a supersolution of \eqref{p} for $A\geq \overline{A}$. Here, $\underline{A}$ and $\overline{A}$ do not depend on $\mu \in [\underline{\mu}, \infty)$ varying (consequently, $\underline{A}\leq \overline{A}$ from 
Lemma \ref{lem:compa2}). 
\end{lem}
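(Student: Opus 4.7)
My plan is to reduce each of the two inequalities defining a sub-- or supersolution to a pointwise algebraic inequality for $A$ with constants independent of $\mu \in [\underline{\mu},\infty)$. Writing $w := \phi_{\Omega} + \mu^{-\tau}$ so that $\psi = Aw^{-\rho}$, a direct chain--rule computation together with $\Delta\phi_{\Omega} = -\beta_{\Omega}\phi_{\Omega}$ gives
\begin{align*}
-\Delta\psi = -A\rho(\rho+1)\, w^{-\rho-2}|\nabla\phi_{\Omega}|^{2} - A\rho\beta_{\Omega}\phi_{\Omega}\, w^{-\rho-1}\quad \text{in }\Omega,
\end{align*}
and, on $\partial\Omega$ where $\phi_{\Omega}=0$ and therefore $w=\mu^{-\tau}$,
\begin{align*}
\frac{\partial\psi}{\partial\nu} = -A\rho\,\mu^{\tau(\rho+1)}\frac{\partial\phi_{\Omega}}{\partial\nu}, \qquad \mu\psi^{q} = A^{q}\mu^{1+\tau\rho q}.
\end{align*}

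The central algebraic observation, which is precisely what motivates the choice of $\tau$ and $\rho$ in \eqref{def:psiA}, is the pair of identities $\rho p = \rho+2$ and $\tau(\rho+1) = 1+\tau\rho q$ (both equal to $(p+1)/(p-2q+1)$ in the second case); these are immediate from $\rho = 2/(p-1)$ and $\tau = (p-1)/(p-2q+1)$. Multiplying the interior inequality $-\Delta\psi \leq \psi-\psi^{p}$ (resp.\ $\geq$) by $w^{\rho+2}>0$ reduces it to
\begin{align*}
A^{p-1}\lessgtr w^{2} + \rho(\rho+1)|\nabla\phi_{\Omega}|^{2} + \rho\beta_{\Omega}\phi_{\Omega}w\quad\text{in }\Omega,
\end{align*}
while cancelling the common factor $\mu^{\tau(\rho+1)}$ on $\partial\Omega$ collapses the boundary inequality to the totally $\mu$--free
\begin{align*}
-A\rho\,\frac{\partial\phi_{\Omega}}{\partial\nu}\lessgtr A^{q}\quad\text{on }\partial\Omega,
\end{align*}
where $\lessgtr$ stands for $\leq$ for a subsolution and $\geq$ for a supersolution.

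For the supersolution case, the bounds $w\leq c_{\underline{\mu}}$, $|\nabla\phi_{\Omega}|^{2}\leq c_{3}$, $\phi_{\Omega}\leq 1$, and $-\partial_{\nu}\phi_{\Omega}\geq c_{1}$ bound both right--hand sides above by constants depending only on $\underline{\mu}$, so since $0<q<1$ (making $A^{1-q}$ unbounded as $A\to\infty$) both inequalities hold once $A\geq \overline{A}$ for a suitable threshold $\overline{A}$. In the subsolution case the boundary inequality is handled symmetrically via $-\partial_{\nu}\phi_{\Omega}\leq c_{2}$ by taking $A$ small; the interior inequality needs a uniform positive lower bound on its right--hand side, which is exactly what the partition into $\Omega\setminus\overline{S_{\varepsilon_{0}}}$ and $S_{\varepsilon_{0}}$ provides. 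On $\Omega\setminus\overline{S_{\varepsilon_{0}}}$ the gradient term dominates with $\rho(\rho+1)|\nabla\phi_{\Omega}|^{2}\geq\rho(\rho+1)b_{1}$, whereas on $S_{\varepsilon_{0}}$ we have $w\geq\phi_{\Omega}\geq b_{2}$ and hence $w^{2}\geq b_{2}^{2}$; taking $\underline{A}^{p-1}$ below the minimum of these two bounds (and also satisfying the subsolution boundary constraint) produces a $\mu$--independent $\underline{A}$.

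The only genuine obstacle is recognising the exponent matching; once this is in place the argument is standard sub/supersolution bookkeeping. The crucial feature is that no residual $\mu$--dependent power of $A$ survives in the reduced inequalities, and this uniformity in $\mu$ is ultimately the reason the particular ansatz \eqref{def:psiA} is used.
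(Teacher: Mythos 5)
Your proposal is correct and follows essentially the same route as the paper: the same computation of $\Delta\psi_A$ and $\partial_\nu\psi_A$, the same exponent identities $\rho(p-1)=2$ and $\tau(\rho+1)=1+\tau\rho q$ eliminating all $\mu$-dependence, the same uniform bounds via $c_1,c_2,c_3,c_{\underline{\mu}}$ for large $A$, and the same splitting of $\Omega$ into $\Omega\setminus\overline{S_{\varepsilon_0}}$ (where $|\nabla\phi_\Omega|^2\geq b_1$) and $S_{\varepsilon_0}$ (where $\phi_\Omega\geq b_2$) for small $A$. The only difference is presentational (multiplying through by $w^{\rho+2}$ instead of factoring out $Aw^{-\rho-2}I_A$), and apart from a slightly loose description of the boundary estimate in the supersolution case (where $-\partial_\nu\phi_\Omega\geq c_1$ bounds the left side from below rather than the right side from above), the argument is sound.
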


\begin{proof}
We construct a supersolution of \eqref{p} in terms of \eqref{def:psiA}. By simple calculations, 
$\nabla \psi = A(-\rho)(\phi_{\Omega} + \mu^{-\tau})^{-\rho-1}\nabla \phi_{\Omega}$, and thus, 
\begin{align*}
\Delta \psi =A(-\rho)(-\rho-1)(\phi_{\Omega} + \mu^{-\tau})^{-\rho-2}|\nabla \phi_{\Omega}|^2 
-A(-\rho)(\phi_{\Omega}+\mu^{-\tau})^{-\rho -1} \beta_{\Omega} \phi_{\Omega}.   
\end{align*}
Since $(1-p)\rho +2=0$, it follows that 
\begin{align} 
\Delta \psi + \psi - \psi^{p} 
&=A(\phi_{\Omega} + \mu^{-\tau})^{-\rho-2}\biggl\{ \rho(\rho +1) |\nabla \phi_{\Omega}|^2 
+ \rho (\phi_{\Omega} + \mu^{-\tau})\beta_{\Omega} \phi_{\Omega} 
+ (\phi_{\Omega} + \mu^{-\tau})^{2} - A^{p-1} \biggr\} \nonumber \\ 
&=: A(\phi_{\Omega} + \mu^{-\tau})^{-\rho-2} I_{A}.  \label{psiformula}
\end{align}
Then, we deduce that, for $\mu\geq \underline{\mu}$,  
\begin{align*}
I_{A} \leq \rho(\rho +1)\, c_{3}+ 
\rho c_{\underline{\mu}} \beta_{\Omega} + c_{\underline{\mu}}^{\, 2} - A^{p-1} \leq 0 \quad\mbox{ in } \Omega, 
\end{align*}
whenever $A\geq \overline{A}_{1}:= \left\{  
\rho(\rho +1)\, c_{3} + \rho c_{\underline{\mu}}\beta_{\Omega} + c_{\underline{\mu}}^{\, 2}
\right\}^{\frac{1}{p-1}}$. 
Furthermore, considering $\tau (\rho +1)=1+ \tau \rho q = \frac{p+1}{p-2q+1}$, 
we infer that 
\begin{align}
\frac{\partial \psi}{\partial \nu} - \mu \psi^q 
&= A^{q} \mu^{\tau (\rho +1)}\left\{ A^{1-q}\rho \left( -\frac{\partial \phi_{\Omega}}{\partial \nu} \right) -1 \right\} \label{psiformula2}  \\  
&\geq A^q\mu^{\tau (\rho +1)}(A^{1-q}\rho \, c_{1}-1) \geq 0 \quad\mbox{ on } \partial\Omega,\nonumber 
\end{align}
whenever $A \geq \overline{A}_{2}:= \left( \rho \, c_{1} \right)^{\frac{1}{q-1}}$. 
Thus, $\overline{A}:=\max(\overline{A}_{1}, \overline{A}_{2})$ is as desired. 

Then, we construct a subsolution of \eqref{p} in terms of \eqref{def:psiA}. 
From \eqref{psiformula}, we deduce  
\begin{align*}
I_{A} \geq \rho (\rho +1)|\nabla \phi_{\Omega}|^2 - A^{p-1}\geq \rho (\rho +1)\, b_{1} - A^{p-1}\geq 0 \quad\mbox{ in } \ 
\Omega\setminus \overline{S_{\varepsilon_{0}}}, 
\end{align*}
whenever $A\leq \underline{A}_{1}:=\{ \rho (\rho +1)\, b_{1} \}^{\frac{1}{p-1}}$. 
Similarly, we deduce 
\begin{align*}
I_{A} \geq (\phi_{\Omega} + \mu^{-\tau})^{2} - A^{p-1} \geq b_{2}^{\, 2} - A^{p-1}\geq 0 \quad\mbox{ in } \ 
S_{\varepsilon_{0}}, 
\end{align*}
whenever $A\leq \underline{A}_{2}:=b_{2}^{\, \frac{2}{p-1}}$. Furthermore, from \eqref{psiformula2}, we infer 
\begin{align*}
\frac{\partial \psi}{\partial \nu} - \mu \psi^q 
\leq A^{q}\mu^{\tau (\rho +1)}(A^{1-q}\rho \, c_{2} - 1)\leq 0 \quad \mbox{ on } \partial \Omega, 
\end{align*}
whenever $A\leq \underline{A}_{3}:=\left( \rho \, c_{2} \right)^{\frac{1}{q-1}}$. 
Thus, $\underline{A}:=\min(\underline{A}_{1}, \underline{A}_{2}, \underline{A}_{3})$ is as desired. 
\end{proof}

Using Lemma \ref{lem:compa2}, we deduce from Lemma \ref{lem:subsuper} the following upper and lower bounds of a positive solution for \eqref{p}. 
\begin{prop} \label{prop:bounds}
Let $\underline{\mu}>0$, and let $\underline{A}, \overline{A}$ be as determined in Lemma \ref{lem:subsuper}. Then, we obtain 
\begin{align} \label{Aline} 
\underline{A}\, (\phi_{\Omega} + \mu^{-\frac{p-1}{p-2q+1}} )^{-\frac{2}{p-1}}\leq u \leq 
\overline{A}\, (\phi_{\Omega} + \mu^{-\frac{p-1}{p-2q+1}} )^{-\frac{2}{p-1}} \quad 
\mbox{ in } \overline{\Omega}
\end{align}
for a positive solution $u$ of \eqref{p} with $\mu\geq \underline{\mu}$. 
\end{prop}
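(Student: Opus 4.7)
The plan is to sandwich any positive solution $u$ of \eqref{p} between $\psi_{\underline{A}}$ and $\psi_{\overline{A}}$ by invoking the classical comparison principle (Lemma \ref{lem:compa2}) twice, once in each direction. Lemma \ref{lem:subsuper} has already supplied the two members of the family $\psi_A$ that play the roles of sub- and supersolution, so the remaining work is to justify that $u$ itself may be used as the complementary super/subsolution.

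First I would observe that, by Proposition \ref{prop:uniq-spositive}(ii), every positive solution $u$ of \eqref{p} satisfies $u \geq 1$ in $\overline{\Omega}$, hence is strictly positive on $\partial\Omega$. As pointed out in the introduction, this boundary positivity together with the elliptic bootstrap argument upgrades $u$ to $C^{2+\theta}(\overline{\Omega})$ and ensures that $u$ satisfies \eqref{p} pointwise in $\overline{\Omega}$. In particular, $u$ is simultaneously a classical subsolution and a classical supersolution of \eqref{p} in the sense defined just before Lemma \ref{lem:compa2}.

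Having this in hand, I would apply Lemma \ref{lem:compa2} twice. Pairing the subsolution $\psi_{\underline{A}}$ with the supersolution $u$ gives $\psi_{\underline{A}} \leq u$ in $\overline{\Omega}$; symmetrically, pairing the subsolution $u$ with the supersolution $\psi_{\overline{A}}$ gives $u \leq \psi_{\overline{A}}$ in $\overline{\Omega}$. Substituting the values $\tau = \frac{p-1}{p-2q+1}$ and $\rho = \frac{2}{p-1}$ into the definition \eqref{def:psiA} then rewrites this two-sided bound as \eqref{Aline}, which is the stated conclusion.

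I do not anticipate any genuine obstacle: Lemma \ref{lem:subsuper} has already absorbed all of the analytic work, namely the scale-invariant ansatz $\psi_A = A(\phi_\Omega + \mu^{-\tau})^{-\rho}$ and the threshold choice of the constant $A$ that is uniform in $\mu \in [\underline{\mu}, \infty)$. The only delicate point is the regularity upgrade that legitimises treating the weak positive solution $u$ as a classical sub/supersolution on the \emph{closed} domain, and this is precisely what the a priori lower bound $u \geq 1$ from Proposition \ref{prop:uniq-spositive}(ii) delivers, as it neutralises the non-Lipschitz boundary nonlinearity $\mu u^{q}$ at the potentially dangerous points where $u$ could otherwise vanish on $\partial\Omega$.
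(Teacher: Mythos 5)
Your proposal is correct and follows essentially the same route as the paper, which deduces \eqref{Aline} directly by combining Lemma \ref{lem:compa2} with the sub- and supersolutions $\psi_{\underline{A}}, \psi_{\overline{A}}$ from Lemma \ref{lem:subsuper}, using the solution $u$ itself (regular and positive in $\overline{\Omega}$ by Proposition \ref{prop:uniq-spositive}(ii)) as the complementary super/subsolution. Your extra remark on the regularity upgrade of $u$ is exactly the justification the paper relies on implicitly; alternatively one could apply Lemma \ref{prop:cp} directly in the weak formulation and avoid it, but this is a cosmetic difference.
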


The next corollary follows directly from Proposition \ref{prop:bounds}.  
\begin{cor}
Let $\underline{\mu}>0$. Then, for $S_{\varepsilon}$ defined by \eqref{Sep},  
\begin{align} \label{boundonS} 
u\leq \overline{A} \, \biggl( \inf_{S_{\varepsilon}}\phi_{\Omega} \biggr)^{-\frac{2}{p-1}} \quad\mbox{ in } \ S_{\varepsilon}, 
\end{align}
and moreover, 
\begin{align} \label{boundonbdry}
\underline{A}\, \mu^{\frac{2}{p-2q+1}} \leq u \leq \overline{A}\, \mu^{\frac{2}{p-2q+1}} 
\quad\mbox{ on } \partial\Omega 
\end{align}
for a positive solution $u$ of \eqref{p} with $\mu\geq \underline{\mu}$. 
\end{cor}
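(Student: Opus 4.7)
The plan is to obtain both assertions by a direct pointwise evaluation of the two-sided bound provided by Proposition \ref{prop:bounds}, since the right-hand sides of \eqref{boundonS} and \eqref{boundonbdry} arise from estimating the factor $(\phi_{\Omega}+\mu^{-(p-1)/(p-2q+1)})^{-2/(p-1)}$ at the relevant points of $\overline{\Omega}$. Nothing new about the PDE is needed; the comparison principle has already done its work in Proposition \ref{prop:bounds}.

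For \eqref{boundonS}, I would start from the upper bound in \eqref{Aline} and simply discard the positive term $\mu^{-(p-1)/(p-2q+1)}$. On $S_{\varepsilon}$ we have $\phi_{\Omega}\geq \inf_{S_{\varepsilon}}\phi_{\Omega}>0$, hence
\begin{equation*}
\phi_{\Omega}+\mu^{-\frac{p-1}{p-2q+1}} \;\geq\; \inf_{S_{\varepsilon}}\phi_{\Omega}\quad\text{on }S_{\varepsilon},
\end{equation*}
and raising to the (negative) power $-2/(p-1)$ reverses the inequality, producing the constant upper bound. Note that this estimate is uniform in $\mu\geq \underline{\mu}$, as required.

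For \eqref{boundonbdry}, the key observation is that $\phi_{\Omega}\equiv 0$ on $\partial\Omega$, so the bracketed quantity in \eqref{Aline} collapses to $\mu^{-(p-1)/(p-2q+1)}$ there. Then
\begin{equation*}
\bigl(\phi_{\Omega}+\mu^{-\frac{p-1}{p-2q+1}}\bigr)^{-\frac{2}{p-1}} \;=\; \mu^{\frac{p-1}{p-2q+1}\cdot\frac{2}{p-1}} \;=\; \mu^{\frac{2}{p-2q+1}}\quad\text{on }\partial\Omega,
\end{equation*}
and substituting into both inequalities of \eqref{Aline} yields the two-sided bound on the boundary with precisely the constants $\underline{A}$ and $\overline{A}$ furnished by Lemma \ref{lem:subsuper}.

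Since each step is an algebraic consequence of \eqref{Aline}, there is no real obstacle: one is really only checking that the exponents combine as $\tau\rho = \tfrac{p-1}{p-2q+1}\cdot\tfrac{2}{p-1} = \tfrac{2}{p-2q+1}$, and that $\phi_{\Omega}$ is bounded below on $S_{\varepsilon}$ (which follows from $\phi_{\Omega}\in C(\overline{\Omega})$ being strictly positive on the compact set $S_{\varepsilon}\subset\Omega$). If anything merits a line of justification, it is the observation that both bounds in \eqref{Aline} hold simultaneously for \emph{all} $\mu\geq \underline{\mu}$ with the same constants $\underline{A},\overline{A}$, so the resulting estimates \eqref{boundonS} and \eqref{boundonbdry} inherit this uniformity in $\mu$, which is what will be exploited in Section \ref{sec:profile} to pass to the limit $\mu\to\infty$.
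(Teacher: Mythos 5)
Your argument is correct and is exactly the intended one: the paper derives the corollary directly from Proposition \ref{prop:bounds} by evaluating the factor $(\phi_{\Omega}+\mu^{-\frac{p-1}{p-2q+1}})^{-\frac{2}{p-1}}$ on $S_{\varepsilon}$ (where $\phi_{\Omega}$ is bounded below by a positive constant) and on $\partial\Omega$ (where $\phi_{\Omega}=0$), with the exponent computation $\tau\rho=\frac{2}{p-2q+1}$. The only cosmetic point is that $S_{\varepsilon}$ itself is open rather than compact, so the positivity of $\inf_{S_{\varepsilon}}\phi_{\Omega}$ should be justified via the compact closure $\overline{S_{\varepsilon}}\subset\Omega$.
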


\section{Existence of a positive solution curve}  
\label{sec:curve} 

In this section, we consider \eqref{p} in the framework of H\"older spaces. 
Let $(\lambda, v)$, $\lambda\geq0$, be a positive solution of \eqref{p}, let 
$\delta_{1}, \delta_{2} > 0$, and let $U = \{ u \in C^{2+\theta}(\overline{\Omega}): \| u -v\|_{C^{2+\theta}(\overline{\Omega})} < \delta_2 \}$. 
In terms of Proposition \ref{prop:uniq-spositive}(ii), we see that $u>0$ in $\overline{\Omega}$ for $u \in U$ if $\delta_{2}>0$ is small. 
Thus, it is possible to define the map $F: (\lambda-\delta_{1}, \lambda+\delta_{1})\times U \rightarrow C^{\theta}(\overline{\Omega})\times C^{1+\theta}(\partial\Omega)$ as $F(\mu,u)=(-\Delta u - u + u^p, \frac{\partial u}{\partial \nu}-\mu u^q)$. To solve \eqref{p}, we consider the equation $F(\mu, u)=0$. 

The Fr\'echet derivative $F_u$ of $F$ with respect to $u$ at $(\lambda, v)$ is given as $F_{u}(\lambda, v)\varphi = ( -\Delta \varphi - \varphi + pv^{p-1}\varphi, \frac{\partial \varphi}{\partial \nu} - q \lambda v^{q-1} \varphi )$. For the associated eigenvalue problem
\begin{align*} 
\begin{cases}
(-\Delta - 1 + pv^{p-1})\varphi = \gamma \varphi & \mbox{ in } \Omega, \\
(\frac{\partial}{\partial \nu}  - q \lambda v^{q-1}) \varphi = \gamma \varphi & \mbox{ on } \partial\Omega, 
\end{cases} 
\end{align*}
where $\gamma\in \mathbb{R}$ is an eigenvalue parameter, we deduce that the smallest eigenvalue $\gamma_{1}=\gamma_{1}(\lambda, v)$ is positive.  Indeed, a positive eigenfunction $\varphi_1$ corresponding to $\gamma_1$ provides 
\begin{align} \label{gamm1posi}
\gamma_{1} = \frac{(p-1)\int_{\Omega} v^q \varphi_1 dx + \lambda (1-q) \int_{\partial\Omega}v^q \varphi_1 d\sigma}{\int_{\Omega} v\varphi_1 dx + \int_{\partial\Omega} v\varphi_1 d\sigma}>0.     
\end{align}
We then apply the implicit function theorem to deduce that, in a neighborhood of $(\lambda, v)$, $F(\mu, u)=0$ consists of a $C^{\infty}$--positive solution curve $\{ (\mu, u_{\mu}) : \mu \in (\lambda-\delta, \lambda+\delta)\}$ for some $\delta > 0$ with the condition that $u_{\lambda}=v$, $u_{\mu}>0$ in $\overline{\Omega}$, and the map $(\lambda -\delta, \lambda +\delta) \rightarrow U$; $\mu\mapsto u_{\mu}$ is $C^{\infty}$. Then, differentiating \eqref{p} with $u=u_{\mu}$ by $\mu$ yields 
\begin{align} \label{eq:umuprime} 
\begin{cases}
\left( -\Delta -1 + pu_{\mu}^{p-1} \right)\frac{d u_{\mu}}{d \mu} = 0 & \mbox{ in } \Omega,  \\
\left( \frac{\partial}{\partial \nu} -q \mu u_{\mu}^{q-1} \right) 
\frac{d u_{\mu}}{d \mu} = u_{\mu}^{\, q} > 0 & \mbox{ on } \partial\Omega. 
\end{cases} 
\end{align}
In terms of \cite[Theorem 7.10]{LGbook2}, assertions \eqref{gamm1posi} and \eqref{eq:umuprime} provide 
\begin{align} \label{umupriposi}
\frac{d u_{\mu}}{d \mu} > 0 \quad \mbox{ in } \ \overline{\Omega}. 
\end{align}

Then, using \eqref{gamm1posi} and \eqref{umupriposi}, we prove the following existence result. 
\begin{prop} \label{prop:curve}
There exists a $C^{\infty}$--positive solution curve $\{ (\mu, u_{\mu})\in \mathbb{R}\times C^{2+\theta}(\overline{\Omega}): \mu \geq 0 \}$ of \eqref{p} such that $u_{0}=1$, and $\mu \longmapsto u_{\mu}$ is $C^\infty$ and increasing, i.e., $u_{\mu}<u_{\tilde{\mu}}$ in $\overline{\Omega}$ for $\mu < \tilde{\mu}$. Furthermore, the positive solution set of \eqref{p} is represented by the positive solution curve in a neighborhood of $(\mu, u_{\mu})$. 
\end{prop}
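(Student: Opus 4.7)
The plan is to piece together local branches produced by the implicit function theorem (IFT) into a single $C^\infty$ branch defined on all of $[0, \infty)$, using the \emph{a priori} estimates of Section~\ref{sec:bounds} to rule out finite-time breakdown and the uniqueness of positive solutions (Proposition~\ref{prop:uniq-spositive}(i)) to glue the pieces.

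First I would apply the IFT at the base point $(\lambda, v) = (0, 1)$. Formula \eqref{gamm1posi} reduces there to $\gamma_1(0, 1) = (p-1)\int_\Omega \varphi_1\,dx / \bigl(\int_\Omega \varphi_1\,dx + \int_{\partial\Omega} \varphi_1\,d\sigma\bigr) > 0$, so $F_u(0,1)$ is an isomorphism and the IFT yields a $C^\infty$ curve $\mu \mapsto u_\mu$ defined on some $[0, \mu_0)$ with $u_0 = 1$ and $u_\mu > 0$ in $\overline{\Omega}$. Let $\mu^* \in (0, \infty]$ be the supremum of all $M > 0$ such that the branch extends as a $C^\infty$ curve of positive solutions to $[0, M)$.

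Next I would show $\mu^* = +\infty$ by contradiction. Assume $\mu^* < \infty$ and fix $\underline{\mu} \in (0, \mu^*)$. For $\mu \in [\underline{\mu}, \mu^*)$, Proposition~\ref{prop:bounds} yields the uniform upper bound $u_\mu \leq \overline{A}(\phi_\Omega + \mu^{-\tau})^{-\rho} \leq \overline{A}(\mu^*)^{2/(p-2q+1)}$ in $\overline{\Omega}$, while \eqref{boundonbdry} gives $u_\mu \geq \underline{A}\,\underline{\mu}^{2/(p-2q+1)}$ on $\partial\Omega$; combined with $u_\mu \geq 1$ in $\overline{\Omega}$ from Proposition~\ref{prop:uniq-spositive}(ii), this confines $u_\mu$ to a fixed compact subset of $(0, \infty)$ throughout $\overline{\Omega}$, where $s \mapsto s^p,\; s^q$ are smooth. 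A standard $W^{2,r}$--Schauder bootstrap then delivers a uniform bound $\|u_\mu\|_{C^{2+\theta}(\overline{\Omega})} \leq C$. Taking $\mu_n \nearrow \mu^*$ and extracting a convergent subsequence, I obtain $u_{\mu_n} \to u^*$ in $C^2(\overline{\Omega})$, where $u^*$ is a positive solution of \eqref{p} at $\mu^*$ with $u^* \geq 1$. Applying IFT at $(\mu^*, u^*)$---again $\gamma_1(\mu^*, u^*) > 0$ by \eqref{gamm1posi}---yields a local $C^\infty$ branch on $(\mu^* - \delta, \mu^* + \delta)$; by Proposition~\ref{prop:uniq-spositive}(i) this branch agrees with $u_\mu$ just below $\mu^*$, producing a $C^\infty$ extension past $\mu^*$ and contradicting maximality.

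Finally, strict monotonicity of $\mu \mapsto u_\mu$ follows by integrating \eqref{umupriposi} along the branch, and the local representation of the positive solution set by the curve is exactly the conclusion of IFT applied at a generic $(\mu, u_\mu)$. The most delicate step will be the uniform $C^{2+\theta}$ estimate up to the boundary as $\mu \to \mu^{*-}$: since $q < 1$, the boundary nonlinearity $\mu u^q$ is only Hölder in $u$ near $u = 0$, so the Schauder iteration breaks down without a \emph{positive} lower bound on $u_\mu\big|_{\partial\Omega}$. The matching lower estimate \eqref{boundonbdry} is exactly what allows the bootstrap to close, which is why the two-sided bounds of Section~\ref{sec:bounds} are the pivotal input.
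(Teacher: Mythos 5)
Your proposal is correct and follows essentially the same route as the paper: implicit function theorem continuation from $(0,1)$ using the positivity of $\gamma_{1}$ in \eqref{gamm1posi}, the a priori bounds of Section~\ref{sec:bounds} plus an $L^{\infty}\to W^{1,r}\to W^{2,r}\to C^{1+\theta}\to C^{2+\theta}$ bootstrap to rule out breakdown at a finite $\mu^{*}$, monotonicity from \eqref{umupriposi}, and the local representation directly from the IFT (the paper phrases the contradiction via blow-up of $\|u_{\mu}\|_{C^{2+\theta}(\overline{\Omega})}$ rather than your limit-and-restart at $\mu^{*}$, but this is the same argument). One small remark: the boundary lower bound \eqref{boundonbdry} is not actually needed to close the bootstrap, since $u_{\mu}\geq 1$ in $\overline{\Omega}$ (Proposition~\ref{prop:uniq-spositive}(ii)) already keeps $u^{q}$ away from its non-Lipschitz point; the paper uses only the upper bound from \eqref{Aline} at this step.
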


\begin{proof}
Observe $F(0,1)=0$. On the basis of \eqref{gamm1posi} and \eqref{umupriposi}, we then use the implicit function theorem step--by--step to obtain a positive solution curve $\{ (\mu, u_{\mu})\in \mathbb{R}\times C^{2+\theta}(\overline{\Omega}): 0\leq \mu < \overline{\mu} \}$ of \eqref{p} for some $\overline{\mu}\in (0, \infty]$ such that $u_{0}=1$, $\mu \mapsto u_{\mu}$ is $C^{\infty}$ and increasing, and $\| u_{\mu} \|_{C^{2+\theta}(\overline{\Omega})} \rightarrow \infty$ as $\mu \to \overline{\mu}^{-}$. In addition, the positive solution set of \eqref{p} can be represented by the positive solution curve in a neighborhood of $(\mu, u_{\mu})$. 
Then, we verify $\overline{\mu}=\infty$.  Assume by contradiction that $\overline{\mu} <\infty$. Choosing $\underline{\mu}\in (0, \overline{\mu})$, we deduce from \eqref{Aline} that $u_{\mu}\leq \overline{A} \, \overline{\mu}^{\frac{2}{p-2q+1}}$ in $\overline{\Omega}$ for $\mu\in [\underline{\mu}, \overline{\mu})$. Hence, for $1<r<\infty$, we apply the $W^{1,r}$estimate \cite[Proposition 3.3]{Am76N} to $(\mu, u_{\mu})$ to obtain that $\| u_{\mu}\|_{W^{1,r}(\Omega)}\leq C$ for $\mu \in [0, \overline{\mu})$. Here, and in what follows, $C$ is a generic positive constant independent of $\mu$, which may change from line to line. For such $\mu$, 
$\| u_{\mu}\|_{W^{2,r}(\Omega)}\leq C$ by elliptic regularity, which implies that $\| u_{\mu}\|_{C^{1+\theta}(\overline{\Omega})}\leq C$ by Sobolev's embedding theorem. Therefore, Schauder's estimate yields that $\| u_{\mu}\|_{C^{2+\theta}(\overline{\Omega})}\leq C$ for $\mu \in [0, \overline{\mu})$, which is a contradiction. 
\end{proof}

\section{Asymptotic profiles and existence of a boundary layer solution} 
\label{sec:profile} 

In this section, we give a proof for Theorem \ref{mr}. 
Let $(\mu, u_{\mu})$, $\mu \geq 0$, be the unique positive solution of \eqref{p} ensured by Propositions \ref{prop:uniq-spositive}(i) and \ref{prop:curve}. Since $u_{\mu}$ is increasing for $\mu \geq 0$, assertion \eqref{boundonS} deduces that $u_{\mu}(x)$ converges pointwisely to some $u_{\infty}(x)$ for $x \in \Omega$ as $\mu \to \infty$. 

The next proposition presents that $u_{\infty}$ provides the asymptotic boundary layer profile for $u_{\mu}$ as $\mu \to \infty$. 
\begin{prop} \label{prop:layer}
Let $(\mu, u_{\mu})$, $\mu \geq 0$, and $u_{\infty}$ be as above. 
Then, for $\varepsilon > 0$, 
\begin{align} \label{umutouinfty}
u_{\mu} \longrightarrow u_{\infty} \ \mbox{ in } C^{2+\theta}(\overline{S_{\varepsilon}}) \ \mbox{ as } \mu \to \infty.
\end{align}
Further, $u_{\infty}\in C^{2+\theta}(\Omega)$ satisfies \eqref{limitprob}. 
\end{prop}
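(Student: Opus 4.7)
The plan is to combine the monotonicity of $\mu \mapsto u_\mu$ (Proposition \ref{prop:curve}) with the uniform interior bounds from Section \ref{sec:bounds} to upgrade the pointwise convergence $u_\mu(x) \to u_\infty(x)$ into $C^{2+\theta}$-convergence on each compact subset $\overline{S_\varepsilon}$, and then to identify $u_\infty$ as a solution of \eqref{limitprob}. The existence of the pointwise limit is already granted by monotonicity and the upper bound \eqref{boundonS}, so the real work is compactness and identification.

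The first step is to derive, for each fixed $\varepsilon > 0$, a uniform bound $\| u_\mu \|_{C^{2+\theta'}(\overline{S_\varepsilon})} \leq C$ for any exponent $\theta' \in (\theta, 1)$ and all $\mu \geq \underline{\mu}$. Applying \eqref{boundonS} on the slightly larger set $S_{\varepsilon/2}$ gives a uniform $L^\infty$ bound. Since $-\Delta u_\mu = u_\mu - u_\mu^p$ holds pointwisely in $\Omega$ (by Proposition \ref{prop:uniq-spositive}(ii) and elliptic regularity, $u_\mu$ is classical there), the right-hand side is uniformly bounded on $S_{\varepsilon/2}$. The same interior bootstrap already used in the proof of Proposition \ref{prop:curve} — interior $L^r$-estimates, Sobolev embedding into $C^{1+\theta'}$, and interior Schauder estimates — then yields the claimed bound on $\overline{S_\varepsilon}$.

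Next, I would use the compact embedding $C^{2+\theta'}(\overline{S_\varepsilon}) \hookrightarrow C^{2+\theta}(\overline{S_\varepsilon})$ together with the uniqueness of the pointwise limit: every sequence $\mu_n \to \infty$ admits, by Arzel\`a–Ascoli, a subsequence converging in $C^{2+\theta}(\overline{S_\varepsilon})$ to some $w$; but $w$ must coincide with $u_\infty$ on $\overline{S_\varepsilon}$ because it agrees with it pointwisely. This forces the whole family $u_\mu$ to converge to $u_\infty$ in $C^{2+\theta}(\overline{S_\varepsilon})$, proving \eqref{umutouinfty}. Since $\varepsilon > 0$ is arbitrary, $u_\infty \in C^{2+\theta}(\Omega)$. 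Passing to the limit pointwisely in $-\Delta u_\mu = u_\mu - u_\mu^p$ on each $\overline{S_\varepsilon}$ gives $-\Delta u_\infty = u_\infty - u_\infty^p$ in $\Omega$, and $u_\infty \geq 1$ in $\Omega$ by Proposition \ref{prop:uniq-spositive}(ii).

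Finally, for the boundary blow-up condition, the lower bound in \eqref{Aline} gives
\[
u_\mu(x) \geq \underline{A}\bigl(\phi_\Omega(x) + \mu^{-\tau}\bigr)^{-\rho} \qquad (x \in \Omega,\ \mu \geq \underline{\mu}),
\]
and letting $\mu \to \infty$ yields $u_\infty(x) \geq \underline{A}\,\phi_\Omega(x)^{-\rho}$ for all $x \in \Omega$. Since $\phi_\Omega \in C^{2+\theta}(\overline{\Omega})$ vanishes on $\partial\Omega$ with $-\partial\phi_\Omega/\partial\nu \geq c_1 > 0$ (Hopf), one has $\phi_\Omega(x) \leq C\,d(x,\partial\Omega)$ in a neighbourhood of $\partial\Omega$, so $u_\infty(x) \to \infty$ as $d(x,\partial\Omega) \to 0$. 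I do not expect a serious obstacle here — all the key ingredients (two-sided bounds, monotonicity, interior regularity) are already in hand from the previous sections. The only point requiring a little care is arranging the Hölder exponent in the bootstrap so as to obtain full $C^{2+\theta}$ convergence rather than merely $C^{2+\theta'}$ for $\theta' < \theta$, which is handled by performing the bootstrap with a slightly larger exponent.
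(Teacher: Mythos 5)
Your proposal is correct and follows essentially the same route as the paper: uniform interior $L^r$/Schauder estimates on nested subdomains using the $L^\infty$ bound \eqref{boundonS}, compactness plus uniqueness of the pointwise (monotone) limit to get full-family $C^{2+\theta}$ convergence (handling the H\"older exponent by bootstrapping with a slightly larger one, just as the paper does by noting $\theta$ is arbitrary), and the blow-up of $u_\infty$ at $\partial\Omega$ from the lower bound in \eqref{Aline}. No gaps; your explicit use of the interior lower bound $u_\infty \geq \underline{A}\,\phi_\Omega^{-2/(p-1)}$ is if anything a slightly more detailed justification of the boundary condition in \eqref{limitprob} than the paper's brief citation of \eqref{boundonbdry}.
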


\begin{proof}
Let $1<r<\infty$, let $\mu\geq \underline{\mu}$, and let $0<\varepsilon^{\prime\prime}<\varepsilon^{\prime}<\varepsilon$. Note that $S_{\varepsilon}\Subset S_{\varepsilon^{\prime}}\Subset S_{\varepsilon^{\prime\prime}}\Subset \Omega$. 
In terms of \cite[Theorem 9.11]{GT83}, we obtain  
\begin{align*}
\| u_{\mu}\|_{W^{2,r}(S_{\varepsilon^{\prime}})} \leq C\left( \| u_{\mu}\|_{L^{r}(S_{\varepsilon^{\prime\prime}})} 
+ \| u_{\mu} - u_{\mu}^p \|_{L^{r}(S_{\varepsilon^{\prime\prime}})} \right).  
\end{align*}
Since $\| u_{\mu}\|_{L^{\infty}(S_{\varepsilon^{\prime\prime}})}\leq C$ from \eqref{boundonS}, we deduce that $\| u_{\mu} \|_{W^{2,r}
(S_{\varepsilon^{\prime}})}\leq C$, implying $\| u_{\mu} \|_{C^{1+\theta}(\overline{S_{\varepsilon^{\prime}}})}\leq C$ by Sobolev's embedding theorem. Furthermore, in terms of \cite[Theorem 6.2]{GT83}, we obtain 
\[
\| u_{\mu} \|_{C^{2+\theta}(\overline{S_{\varepsilon}})} \leq C \left( \| u_{\mu} \|_{C(\overline{S_{\varepsilon^{\prime}}})} 
+ \| u_{\mu}-u_{\mu}^p \|_{C^{\theta}(\overline{S_{\varepsilon^{\prime}}})} \right), 
\]
and thus, it follows that $\| u_{\mu}\|_{C^{2+\theta}(\overline{S_{\varepsilon}})}\leq C$. By a compactness argument, we deduce that, up to a subsequence, $u_{\mu} \rightarrow u_{\infty}$ in $C^{2+\alpha}(\overline{S_{\varepsilon}})$ as $\mu \to \infty$ for $\alpha < \theta$. 
Assertion \eqref{umutouinfty} thus follows, because $u_{\infty}$ is unique, and $\theta$ and $\alpha$ are arbitrary. 
Finally, assertions \eqref{boundonbdry} and \eqref{umutouinfty} ensure that $u_{\infty} \in C^{2+\theta}(\Omega)$, and it admits \eqref{limitprob}. 
\end{proof}

\begin{proof}[Proof of Theorem \ref{mr}]
The existence and uniqueness assertions are from Propositions \ref{prop:curve} and \ref{prop:uniq-spositive}(i), respectively. 
Assertion (i) follows from Proposition \ref{prop:layer} and \eqref{boundonbdry}. 

Now, we verify assertion (ii). Since $u_{\mu}(x) \rightarrow u_{\infty}(x)$ for $x \in \Omega$ and $u_{\mu}$ is increasing for $\mu \geq 0$, the monotone convergence theorem provides that, for $r>0$, 
\begin{align*}
\int_{\Omega} u_{\mu}^{r}dx \longrightarrow \int_{\Omega} u_{\infty}^{r} dx \in (0, \infty]. 
\end{align*}
First, we consider the case $0< r < \frac{p-1}{2}$.
Choose $d_{1}>0$ such that $d_1 \leq \frac{\phi_{\Omega}(x)}{d(x, \partial\Omega)}$ for $x \in \Omega$. From \eqref{Aline}, we deduce that 
\[
\int_{\Omega} u_{\mu}^{r} dx 
\leq \overline{A}^{\, r} \int_{\Omega} \phi_{\Omega}^{-\frac{2r}{p-1}} dx 
\leq \overline{A}^{\, r} d_{1}^{-\frac{2r}{p-1} }\int_{\Omega} d(x,\partial\Omega)^{-\frac{2r}{p-1}} dx. 
\]
Because $\int_{\Omega} d(x,\partial\Omega)^{-\frac{2r}{p-1}} dx < \infty$ from $-\frac{2r}{p-1}>-1$, we deduce that $\int_{\Omega} u_{\infty}^{r} dx \in (0, \infty)$. In addition, because $0<(u_{\infty}-u_{\mu})^{r}\leq u_{\infty}^{r}$ in $\Omega$, the Lebesgue dominated convergence theorem provides that $\int_{\Omega} (u_{\infty}-u_{\mu})^{r} dx \rightarrow 0$ as $\mu \to \infty$. 
Second, we consider the case $r\geq \frac{p-1}{2}$. Choose $d_2 > 0$ such that $\frac{\phi_{\Omega}(x)}{d(x,\partial\Omega)}\leq d_{2}$ for $x\in \Omega$, and 
we set $C_{r} = \underline{A}^{r}2^{-\frac{2r}{p-1}}d_{2}^{-\frac{2r}{p-1}}>0$. 
Let $K>0$. By the condition $-\frac{2r}{p-1} \leq -1$, there exists a small $\varepsilon > 0$ such that $\int_{S_{\varepsilon}} d(x,\partial\Omega)^{-\frac{2r}{p-1}} dx > \frac{K}{C_{r}}$, and then, we can choose a large $\mu_{\varepsilon}>0$ such that if $\mu\geq \mu_{\varepsilon}$, then $\mu^{-\frac{p-1}{p-2q+1}}\leq \phi_{\Omega}(x)$ for $x \in S_{\varepsilon}$. Thus, we deduce from \eqref{Aline} that if $\mu\geq \mu_{\varepsilon}$, then 
\begin{align*}
\int_{\Omega} u_{\mu}^{r} dx \geq \underline{A}^{r} \int_{S_{\varepsilon}} \left( 2\phi_{\Omega} \right)^{-\frac{2r}{p-1}} dx \geq C_{r} \int_{S_{\varepsilon}} d(x,\partial\Omega)^{-\frac{2r}{p-1}}dx > K,  
\end{align*}
and hence, $\int_{\Omega} u_{\infty}^{r} dx = \infty$. Third, we consider the case $r\leq0$. From Proposition \ref{prop:curve}, $u_{\mu}^{r}$ is decreasing if $r<0$, and thus, $u_{\mu}^{r}\leq 1$ in $\overline{\Omega}$. Additionally, from \eqref{Aline}, $u_{\mu}^{r}\geq \overline{A}^{\, r}\phi_{\Omega}^{\frac{2(-r)}{p-1}}$ in $\overline{\Omega}$. Hence, the Lebesgue dominated convergence theorem yields that $\int_{\Omega} u_{\mu}^{r}dx \rightarrow \int_{\Omega} u_{\infty}^{r} dx \in (0, \infty)$. 

Next, we verify assertion (iii). Let $r\geq1$. Then, we deduce from \eqref{boundonbdry} that $\| u_{\mu} \|_{L^{r}(\partial\Omega)} \rightarrow \infty$ as $\mu \to \infty$, implying $\| u_{\mu}\|_{W^{1,r}(\Omega)} \rightarrow \infty$ by the trace theorem \cite[Theorem 1, Section 5.5]{Ev10}. If $r<\frac{p-1}{2}$, then assertion (ii) provides that $\| \nabla u_{\mu} \|_{L^{r}(\Omega)} \rightarrow \infty$ as $\mu \to \infty$. Meanwhile, if $r\geq \frac{p-1}{2}$ and $\Omega$ is convex, then we use Poincar\'e's inequality \cite[(7.45)]{GT83} to infer that, for $S\subset \Omega$, 
\begin{align} \label{Pineq}
\| u_{\mu} - (u_{\mu})_{S} \|_{L^{r}(\Omega)}\leq C \| \nabla u_{\mu} \|_{L^{r}(\Omega)} \ \ 
\mbox{ with } \ (u_{\mu})_{S} := \frac{1}{|S|}\int_{S}u_{\mu} \, dx. 
\end{align}
From \eqref{boundonS}, for $\varepsilon > 0$ fixed, we infer that 
$\| (u_{\mu})_{S_{\varepsilon}} \|_{L^{r}(\Omega)} \leq C$ for $\mu\geq \underline{\mu}$, thus deducing from \eqref{Pineq} that 
\[
\| u_{\mu} \|_{L^{r}(\Omega)} \leq C \left( 1 + \| \nabla u_{\mu} \|_{L^{r}(\Omega)} \right). 
\]
Therefore, the desired assertion follows from assertion (ii), which completes the proof of Theorem \ref{mr}. 
\end{proof}

{\small  
 
} 
\end{document}